%% file: main.tex
\documentclass[11pt]{article}
\input{preamble}
\title{A Two-Dimensional Extremizer for\\Relatively Inexact Gradient Descent\\[0.4em]
}
\author{Jinze Zhao\\University of California, San Diego\\\texttt{jiz419@ucsd.edu}}
\date{}

\begin{document}
\maketitle
\begin{abstract}
\input{sections/abstract}
\end{abstract}
% \clearpage
% \tableofcontents
% \newpage
\input{sections/introduction}
\input{sections/main}
\input{sections/parameters}
\input{sections/upper}
\input{sections/construction}
\input{sections/dimension}
\input{sections/conclusion}
\input{sections/disclosure}
\bibliographystyle{plainnat}
\bibliography{references}
\appendix
\input{sections/appendix}
\end{document}

%% file: preamble.tex
\usepackage[T1]{fontenc}
\usepackage[utf8]{inputenc}
\usepackage{lmodern}
\usepackage[margin=1in]{geometry}
\usepackage{microtype}

\usepackage{amsmath,amssymb,amsthm,mathtools}
\usepackage{bm}

\usepackage{graphicx}
\graphicspath{{figures/}}
\usepackage{booktabs}
\usepackage{array}

\usepackage[shortlabels]{enumitem}
\usepackage[numbers,sort&compress]{natbib}
\usepackage{xcolor}
\usepackage[colorlinks=true,linkcolor=blue,citecolor=blue,urlcolor=blue]{hyperref}
\usepackage[capitalise,noabbrev]{cleveref}

\theoremstyle{plain}
\newtheorem{theorem}{Theorem}[section]
\newtheorem{lemma}[theorem]{Lemma}
\newtheorem{proposition}[theorem]{Proposition}

\theoremstyle{definition}

\theoremstyle{remark}
\newtheorem{remark}[theorem]{Remark}

\newcommand{\R}{\mathbb{R}}

\newcommand{\ip}[2]{\langle #1,#2\rangle}
\newcommand{\norm}[1]{\lVert #1\rVert}
\newcommand{\hL}{h_{\mathrm L}}
\newcommand{\hR}{h_{\mathrm R}}
\newcommand{\Qstar}{Q_{\ast}}
\allowdisplaybreaks[1]

%% file: sections/abstract.tex
We give an explicit extremal function for one step of gradient descent when the
gradient error is bounded relative to the true gradient. The function is the
sum of a quadratic in one coordinate and a Huber function in the other.
It attains, for every admissible relative error level and every stepsize in
the intermediate regime, the proposed worst-case constants for both the
final squared gradient divided by the initial objective gap and the final
squared gradient divided by the objective decrease. This proves
Conjecture~2.6 of Vernimmen and Glineur, first stated in June 2025.
A change of variable reduces the parameter description to a scalar cubic
with a rigorously specified root. We prove all parameter inequalities,
derive a nonnegative-square certificate for the upper bound, and verify
the matching construction directly. We also characterize the exact
one-dimensional worst-case value and prove that two dimensions are
necessary in the open intermediate regime. The argument is analytic;
the accompanying symbolic and numerical checks are supplementary.

%% file: sections/introduction.tex
\section{Introduction}\label{sec:introduction}
Relative gradient errors change the geometry of a worst-case optimization
step. An error can shorten or lengthen a gradient, but it can also change
its direction. This paper identifies an explicit smooth convex function
on which that directional freedom determines the exact worst-case
performance of a single step.

The target is a specific published conjecture, rather than a newly posed
question. Vernimmen and Glineur~\cite{VG2025v1,VG2025v2,VG2026} proposed a
three-regime one-step bound and gave analytic extremizers in the two
outer regimes. Their Conjecture~2.6 asks whether the intermediate bound
is attained by a bivariate function. It appears in the June 20, 2025
preprint, the September 11, 2025 revision, and the accepted journal
manuscript. Thus the conjecture was more than one year old on the date
of this draft. A search of those sources and subsequent indexed work
found no later analytic resolution. Priority beyond the checked literature
remains subject to expert review.

The proposed worst-case constants were already established as upper bounds.
Our contribution is an explicit attaining function throughout the entire intermediate
regime, together with a complete feasibility proof. We also give a short,
self-contained certificate for the matching upper bound and establish
that one dimension is insufficient in the interior.

The performance-estimation perspective originated with
Drori and Teboulle~\cite{DT2014}. Smooth convex interpolation provides
finite inequalities for analyzing such problems~\cite{THG2017}.
Here only the necessary two-point inequalities are used, and they are
proved below. No interpolation existence theorem, semidefinite solver,
numerically identified rank, or unproved regularity claim is required
to construct the extremizer.

The main observation is that the intermediate extremizer combines two
familiar geometries. A quadratic coordinate permits overshoot, while
a Huber coordinate keeps its gradient unchanged along the step.
The error budget is shared between these coordinates. The cubic root
fixes the exact proportion at which the upper-bound certificate vanishes.

% \paragraph{Reading guide.}
% Section~\ref{sec:main} states the result and gives the construction.
% Section~\ref{sec:parameters} proves that every parameter in it is well
% defined. Section~\ref{sec:upper} proves the upper bound in arbitrary
% dimension. Section~\ref{sec:construction} verifies the extremal function
% and treats both endpoints. Section~\ref{sec:dimension} establishes the
% dimension claim. Appendix~\ref{sec:verification} describes the
% supplementary checks and records an exact worked example.

% \paragraph{Draft status.}
% This manuscript was prepared with AI assistance and has not been peer
% reviewed or formally verified in a proof assistant. Its mathematical
% claims rest on the displayed proofs. The source archive includes an
% independent symbolic check of the polynomial identities used in them.

%% file: sections/main.tex
\section{Problem and main result}\label{sec:main}
All spaces are finite-dimensional Euclidean spaces. A differentiable
function $f:\R^n\to\R$ is $L$-smooth if
$\norm{\nabla f(x)-\nabla f(y)}\le L\norm{x-y}$ for all $x,y$.
We assume throughout that $f$ is convex, $L>0$, and
$f_\ast=\min_x f(x)$ is attained. Fix $0<\delta<1$. A relatively
inexact gradient $d$ at $x_0$ satisfies
\begin{equation}\label{eq:update}
 \norm{d-\nabla f(x_0)}\le\delta\norm{\nabla f(x_0)},
 \qquad x_1=x_0-\frac hL d.
\end{equation}
The number $h$ is the normalized stepsize. Define
\begin{align}
 \hL&=\frac{3}{2(1+\delta)},&
 \hR&=\frac{3\delta+2-\sqrt{4-3\delta^2}}
                   {2\delta(1+\delta)}.\label{eq:thresholds}
\end{align}
The closed interval $[\hL,\hR]$ is called the intermediate regime.
Its endpoints are distinct for every $0<\delta<1$.

For a fixed $h\in[\hL,\hR]$, let $z$ be the largest positive root of
\begin{equation}\label{eq:p}
 p(z)=z^3+z^2+\bigl[h-1-(1-\delta^2)h^2\bigr]z+h-1=0.
\end{equation}
Existence, the root selection, and all relevant signs are proved in
Lemma~\ref{lem:parameter}. Set
\begin{equation}\label{eq:constants}
 a=1-h,\quad
 \rho=\frac{z^2-a}{2z(1-z)},\quad
 D=\rho+\frac12=\frac{z-a}{2z(1-z)},\quad
 \mathcal C=\frac1D,\quad \mathcal C_{\rm dec}=\frac1\rho.
\end{equation}

\begin{theorem}[Sharp intermediate-regime constants]\label{thm:main}
For every $0<\delta<1$ and $h\in[\hL,\hR]$, the constants in
\eqref{eq:constants} are well defined and positive. Every step
\eqref{eq:update} on every smooth convex function satisfying the above
assumptions obeys
\begin{align}
 \frac1L\norm{\nabla f(x_1)}^2
   &\le\mathcal C\bigl(f(x_0)-f_\ast\bigr),\label{eq:gap-bound}\\
 \frac1L\norm{\nabla f(x_1)}^2
   &\le\mathcal C_{\rm dec}\bigl(f(x_0)-f(x_1)\bigr).
       \label{eq:decrease-bound}
\end{align}
For every such pair $(h,\delta)$, an explicit function on $\R^2$
attains equality in both inequalities, with nonzero final gradient
and positive objective gap and decrease. Consequently both constants
are the exact dimension-unrestricted worst-case values.
\end{theorem}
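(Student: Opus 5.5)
The proof splits into three parts: parameter facts, the upper bound, and the construction. \emph{First}, I would invoke Lemma~\ref{lem:parameter} to get a largest positive root $z$ of \eqref{eq:p} with the sign information needed here. Presumably this is $0<z<1$ and $z^2>a$, so that $\rho>0$ and hence $D>\rho>0$. For $h\ge 1$ we have $a\le0$, so $z^2>a$ is automatic. For $h<1$ it must come from evaluating $p$ at $z=\sqrt a$ and checking the sign there. With this, $\mathcal C$ and $\mathcal C_{\rm dec}$ are well defined and positive.

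\emph{Second}, I would prove \eqref{eq:gap-bound} and \eqref{eq:decrease-bound} in arbitrary dimension by a performance-estimation certificate. Normalize $L=1$ and write $g_0=\nabla f(x_0)$, $g_1=\nabla f(x_1)$, $e=d-g_0$ with $\norm e^2\le\delta^2\norm{g_0}^2$. The only inequalities I would use are the smooth-convex two-point inequalities
\begin{equation*}
 f(y)\ge f(x)+\ip{\nabla f(x)}{y-x}+\tfrac12\norm{\nabla f(y)-\nabla f(x)}^2 ,
\end{equation*}
taken among the points $x_0$, $x_1$, $x_\ast$. These follow from the descent lemma applied to $f-\ip{\nabla f(x)}{\cdot}$, and I would prove them inline. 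For \eqref{eq:decrease-bound} the relevant pair is $(x_0,x_1)$ in both directions. I would take a nonnegative combination of these with multipliers $\lambda_{01},\lambda_{10}$ and add $\mu(\delta^2\norm{g_0}^2-\norm e^2)$, with $\mu\ge0$. The target is the identity
\begin{equation*}
 \mathcal C_{\rm dec}\bigl(f(x_0)-f(x_1)\bigr)-\norm{g_1}^2=(\text{nonneg.\ combination})+\norm{\alpha g_0+\beta g_1+\gamma e}^2 ,
\end{equation*}
that is, a Gram-matrix certificate of rank one. For \eqref{eq:gap-bound} I would add inequalities involving $x_\ast$, using $\nabla f(x_\ast)=0$.

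The multipliers and the coefficients $\alpha,\beta,\gamma$ should be rational functions of $z$ and $h$. The natural way to find them is to require that the $3\times3$ residual Gram form be singular, and I expect that this singularity condition reduces exactly to $p(z)=0$. This is the meaning of the cubic: the certificate vanishes at the right proportion. Verifying the identity is then polynomial algebra modulo $p(z)$. Verifying $\lambda,\mu\ge0$ uses the sign facts from the first step, on the regime $[\hL,\hR]$.

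\emph{Third}, I would build the extremizer $f(u,v)=\tfrac{c}{2}u^2+H_\tau(v)$ with $c\le 1$ and $H_\tau$ a Huber function of slope bound $\tau$. I would start from $x_0$ in the linear region of $H$, choose $e$ to split the error budget between the coordinates, and choose $\delta$ tight. The parameters are fit so that three things happen: the $v$-coordinate stays in the linear zone, so its gradient is unchanged; the $u$-coordinate overshoots with factor $1-hc\,(\cdot)$; and all certificate inequalities hold with equality. I would then compute $\norm{\nabla f(x_1)}^2$, $f(x_0)-f_\ast$ and $f(x_0)-f(x_1)$ directly and check that the ratios equal $1/D$ and $1/\rho$, again using $p(z)=0$. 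This also shows that the final gradient, the gap and the decrease are nonzero. Sharpness of both constants follows.

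The main obstacle is the second step: finding the explicit certificate and proving nonnegativity of its multipliers over the whole interval $[\hL,\hR]$. At the endpoints some multiplier should vanish, so these cases need separate care. I would try the parametrization by $z$ first, because it likely makes each multiplier factor as a product of terms such as $z$, $1-z$, $z^2-a$ and $z-a$, whose signs are already known.
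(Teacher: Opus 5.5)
Your outline has the same architecture as the paper's proof: Lemma~\ref{lem:parameter}, then a rank-one Gram certificate built from $Q_{01}$, $Q_{10}$, $\Qstar$ and the error slack, then a quadratic-plus-Huber function on which every slack vanishes. But you stop before producing either decisive object, so neither the upper bound nor attainment is actually proved.

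\emph{The certificate.} The missing certificate is \eqref{eq:certificate}. It puts weights $1/(1-z)$ and $z/(1-z)$ on $Q_{01}$ and $Q_{10}$, so $z$ is exactly the ratio of the two interpolation multipliers. It puts weight $B=h^2z/(2(1-z)(z+a))$ on $Q_e$, and the residual is $\norm{w}^2/(2z(1-z)(z+a))$ with $w=hze-(z+a)(zg_0-g_1)$. The gap version just adds $\Qstar$ with weight one, since $D=\rho+\tfrac12$.

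The cubic comes from the rank-one requirement, not from mere singularity. Singularity is a single equation in the three unknowns $(z,\mu,\rho)$. Rank one gives three conditions:
\begin{itemize}
\item matching the $g_0g_1$ cross term with the product of the $g_0e$ and $g_1e$ terms forces $\mu=B$;
\item matching the $\norm{g_0}^2$ coefficient then forces \eqref{eq:root-identity}, i.e.\ \eqref{eq:p};
\item matching the $\norm{g_1}^2$ coefficient gives $\rho=(z^2-a)/(2z(1-z))$.
\end{itemize}

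Positivity of the weights needs $z+a>0$. This is not on your list of sign facts and is not automatic for $h>1$; the lemma derives it from $A\ge0$, hence from $h\le\hR$. Contrary to your expectation, no weight vanishes at either endpoint.

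\emph{The construction.} It is likewise only sketched, and two features of your description would fail if taken literally. Because the certificate has strictly positive weights, an extremizer must make every slack vanish.
\begin{itemize}
\item A quadratic coordinate of curvature $c<1$ contributes $\tfrac{c(1-c)}{2}(\Delta u)^2>0$ to $Q_{01}$ whenever it moves, so the curvature must be exactly one.
\item The Huber coordinate of $x_1$ (threshold $\tau$ in your notation) cannot stay strictly inside the linear zone. Then $\Qstar\ge\tau(|v_1|-\tau)>0$ and the gap bound is strict.
\item Nor can it enter the quadratic zone, since the Huber part of $Q_{01}$ is then $(\tau-v_1)(v_0-\tau)>0$. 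It must land exactly on the junction.
\end{itemize}

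In the paper's notation this forces $x_0=(aT,c+s)$ and $x_1=(z^2T,c)$, with $T=\sqrt{1-2z}$, $c=\sqrt{zA}$ and $s=(z^2-a)c/z$. These are real only because $z\le\tfrac12$ and $A\ge0$. Admissibility, $\norm{d-g_0}=\delta\norm{g_0}$, rests on the factorization in Section~\ref{sec:construction} combined with \eqref{eq:root-identity}.

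The quadratic coordinate overshoots only when $h>1$. For $h<1$ one has $u/v=z^2/a>1$, and for $h=1$ one has $v=0$, so it moves away from $0$.

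The endpoints degenerate in the extremizer, not in the certificate: $T=0$ at $\hL$ and $c=s=0$ at $\hR$. The same formulas remain valid there, so no separate endpoint argument is needed.
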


Here and below a worst-case value is the supremum of the stated ratio
over functions, initial points, and admissible directions with positive
denominator. The theorem says that these suprema are maxima already
in dimension two.

\subsection{The explicit function}
For $c\ge0$, define the Huber function
\begin{equation}\label{eq:huber}
 H_c(y)=
 \begin{cases}
  y^2/2,& |y|\le c,\\
  c|y|-c^2/2,& |y|\ge c.
 \end{cases}
\end{equation}
The two formulas agree at $|y|=c$. In particular $H_0\equiv0$.
Define
\begin{align}
 T&=\sqrt{1-2z},& v&=aT,&u&=z^2T,\label{eq:uv}\\
 c&=\sqrt{z\bigl[a(1-z)+z^2\bigr]},&
 s&=\frac{z^2-a}{z}\,c.&&\label{eq:cs}
\end{align}
Lemma~\ref{lem:parameter} ensures that the square roots are real and
$s\ge0$. The normalized example is
\begin{equation}\label{eq:extremizer}
 \begin{gathered}
 f(x,y)=\frac{x^2}{2}+H_c(y),\\
 x_0=(v,c+s),\qquad x_1=(u,c),\qquad
 d=\left(\frac{v-u}{h},\frac{s}{h}\right).
 \end{gathered}
\end{equation}
The notation $x_0,x_1$ denotes points of $\R^2$ in this display;
$x,y$ denote their scalar coordinate variables.
The minimum is zero and is attained at $(0,0)$.
For an arbitrary $L>0$, use $L f$ and the direction $Ld$ at the
same two points. Section~\ref{sec:construction} checks every assertion
needed for \eqref{eq:extremizer}.

\subsection{Identification with the published conjecture}
The original parameter is the largest real root $\lambda$ of
\begin{align}
 0={}&\bigl[(\delta^2-1)h^2+2h\bigr]\lambda^3
   +\bigl[2(\delta^2-1)h^2+5h-4\bigr]\lambda^2\notag\\
 &+\bigl[(\delta^2-1)h^2+4h-4\bigr]\lambda+h-1.
 \label{eq:source-cubic}
\end{align}
The change of variable is
\begin{equation}\label{eq:change}
 \lambda=\frac{z}{1-z},\qquad z=\frac{\lambda}{1+\lambda}.
\end{equation}
Indeed multiplication of $p(\lambda/(1+\lambda))$ by
$(1+\lambda)^3$ gives exactly the polynomial in
\eqref{eq:source-cubic}. The root correspondence is proved, including
the largest-root qualification, in Lemma~\ref{lem:parameter}.
Substituting \eqref{eq:change} into \eqref{eq:constants} gives
\begin{align*}
 \rho
 &=h-1+\frac h2\lambda+\frac{h-1}{2\lambda}
   =\frac{h\lambda^2+2(h-1)\lambda+h-1}{2\lambda},\\
 D&=\frac{h\lambda^2+(2h-1)\lambda+h-1}{2\lambda}.
\end{align*}
Thus $1/\rho$ and $1/D$ are precisely the intermediate constants
in Theorem~2.3 of~\cite{VG2025v2}. Theorem~\ref{thm:main} supplies
the analytic bivariate attainment requested by its Conjecture~2.6.

%% file: sections/parameters.tex
\section{The admissible root and its parameter bounds}
\label{sec:parameters}

The construction requires more than the existence of a root of a cubic.  We
must identify the root used in the published bound, show that every square
root in the construction is real, and verify the signs of the displacement
and the dual multipliers.  The following lemma supplies these facts on the
entire closed intermediate interval, including the case $h=1$.

\begin{lemma}[Admissible parameter]\label{lem:parameter}
Fix $0<\delta<1$, and let $\hL\leq h\leq \hR$, with the thresholds in
\eqref{eq:thresholds}.  Set $a=1-h$ and write the polynomial in
\eqref{eq:p} as
\begin{equation}\label{eq:parameter-polynomial}
 p(t)=t^3+t^2-\bigl(a+(1-\delta^2)h^2\bigr)t-a.
\end{equation}
There is a largest positive real root $z$ of $p$.  It satisfies
\begin{gather}
 0<z\leq\tfrac12,\qquad z+a>0,\qquad z-a>0,
 \qquad z^2-a>0,                                      \label{eq:root-signs}\\
 A:=a(1-z)+z^2\geq0.                                \label{eq:root-radicand}
\end{gather}
Moreover,
\begin{equation}\label{eq:root-endpoints}
 z=\tfrac12\ \Longleftrightarrow\ h=\hL,
 \qquad
 A=0\ \Longleftrightarrow\ h=\hR.
\end{equation}
The number $\lambda=z/(1-z)$ is the largest real root of the source
cubic \eqref{eq:source-cubic}.  Finally, the quantities
\begin{equation}\label{eq:parameter-b-k}
 b=1-\frac{h\lambda}{2}-\frac{1}{2(1+\lambda)},
 \qquad k=\frac{(z+a)(1-z)}{z}
\end{equation}
satisfy $b>0$, $k>0$, and $h-k=(z^2-a)/z>0$.
\end{lemma}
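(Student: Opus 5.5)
The plan is to treat $p$ as a function of $t$ with $h$ (equivalently $a$) as parameter, and to use the relation $A=a(1-z)+z^2\ge 0$ and the value $p(1/2)$ to locate $z$ inside $(0,1/2]$. Write $q=(1-\delta^2)h^2>0$.

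\emph{Step 1: existence and sign information.}
We have $p(0)=-a=h-1$ and $p(t)\to+\infty$. We also compute
\[
p(\tfrac12)=\tfrac38-\tfrac{3a}{2}-\tfrac q2 .
\]
In terms of $h$ this is $\tfrac32h-\tfrac98-\tfrac q2$. A direct check shows that $p(1/2)\ge0$ exactly when $(1+\delta)h\ge 3/2$ on the relevant branch, i.e.\ $h\ge\hL$ (the factor $h(1-\delta)\le 3/2-\ldots$ is handled using $h\le\hR<1/(1-\delta)$). So when $h\ge \hL$ the largest root lies at or below $1/2$, provided $p$ is increasing on $[1/2,\infty)$; that follows from $p'(t)=3t^2+2t-a-q$ and $p'(1/2)\ge0$, which we will check.

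For positivity, if $h>1$ then $p(0)>0$. In that case we need a root in $(0,1/2]$ with $p$ negative somewhere in between. I will instead argue via $A$. Substituting $a=z^2/(z-1)\cdot(-1)$ shows that $A=0$ is the curve where $z$ is a double-root type boundary. The cleanest route is to parametrize: solve $p(z)=0$ for $q$,
\[
q=\frac{z^3+z^2-az-a}{z}=\frac{(z+1)(z^2-a)}{z}.
\]
Hence any positive root automatically has $z^2-a=qz/(1+z)>0$. From this, $z-a>z-z^2\ge0$ and $z+a>z-z^2+\ldots>0$ follow when $z\le 1/2$. Indeed $z+a>0$ is immediate if $a\ge0$; if $a<0$, use $-a<\ldots$, which is the statement $A\ge0$ together with $z<1$.

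\emph{Step 2: the endpoint characterizations and $A\ge0$.}
Here $z=1/2$ if and only if $p(1/2)=0$, i.e.\ $h=\hL$. For $A$, note that $A=0$ means $a=-z^2/(1-z)$. Plugging this into $q=(z+1)(z^2-a)/z$ expresses $h$ and $q$ rationally in $z$. Eliminating $z$ then yields the defining quadratic of $\hR$, namely $\delta(1+\delta)h^2-(3\delta+2)h+\ldots$; I will verify that $\hR$ is its root. Monotonicity of $z$ in $h$ (implicit differentiation, with $\partial_h p<0$ at the root and $p'(z)>0$ since $z$ is the largest simple root) then shows that $A$ changes sign only at $\hR$, so $A\ge0$ on $[\hL,\hR]$. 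I expect this elimination and sign bookkeeping to be the main obstacle. I would first try to express everything as the single function $\phi(z)=$ the value of $h$ along the root curve and show that it is monotone.

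\emph{Step 3: the source cubic, $b$, and $k$.}
The map $z\mapsto z/(1-z)$ is increasing from $(-\infty,1)$ onto $(-1,\infty)\cup\ldots$, and it identifies the roots via the stated $(1+\lambda)^3$ identity. We check that no root of the source cubic corresponds to $z\ge1$ or to $\lambda<-1$ exceeding $\lambda(z)$. This holds because $p>0$ on $[1/2,\infty)$, so roots with $z>1$ do not exist; and a root with $z<0$ gives $\lambda<0<\lambda(z)$. Then $k>0$ follows from $z+a>0$, and $h-k=(z^2-a)/z$ is an algebraic identity using $a=1-h$. Finally, for $b$ we use $1/(1+\lambda)=1-z$ and $h\lambda\le h$ with $\lambda\le1$. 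This gives $b=\tfrac{1+z}{2}-\tfrac{hz}{2(1-z)}$, and positivity reduces to $(1-z^2)>hz$. I will deduce this from $z\le1/2$ and $h\le\hR<3/2$, sharpening via the root equation if needed.
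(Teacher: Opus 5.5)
Your plan has a genuine gap in the range $1<h\le\hR$, which is nonempty for every $\delta$ because $\hR>1$. There $p(0)=-a>0$ and $p(\tfrac12)\ge0$. Since $p$ is convex on $[0,\infty)$, positive roots occur in pairs that can annihilate in a double root, so one must exhibit a point where $p\le0$.

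You explicitly postpone this (``I will instead argue via $A$''), but everything you then do with $A$ presupposes a root. The formula $q=(z+1)(z^2-a)/z$ and the elimination $A=0\Rightarrow\delta=D(z)$, $h=H(z)=\hR$ (which is correct) are statements about a root already in hand. The continuation in Step~2 does not fill the hole. You justify $p'(z)>0$ by ``$z$ is the largest simple root,'' and simplicity is exactly what must be shown for the branch to survive.

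The sign you assert is also wrong. At a root, your own formula for $q$ gives $\partial_h p(z)=1+z-2(1-\delta^2)hz=(1+z)(2-h-2z^2)/h>0$, so $z$ decreases from $\tfrac12$ at $\hL$ to $r$ at $\hR$. Finally, the elimination yields only $A=0\Rightarrow h=\hR$. For the converse you must show that at $h=\hR$ the root $r$ (where $A=0$) is the larger of the two positive roots, and you do not address this.

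The paper supplies the missing ingredient as an explicit witness. Let $\beta>0$ solve $\beta^2+a(1-\beta)=0$, so that $h=H(\beta)$ with $H(t)=(1-t+t^2)/(1-t)$. Monotonicity of $H$ gives $\beta\le r$, and monotonicity of $D(t)=(1-2t)/(1-t+t^2)$ then gives $\delta\le D(\beta)$. Hence $p(\beta)=\beta h^2(\delta^2-D(\beta)^2)\le0$, so a root lies in $[\beta,\tfrac12]$ and convexity makes it the largest. At $h=\hR$ one checks $p'(\beta)=\beta^2(1-2\beta)/(1-\beta)>0$.

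If you want to keep the continuation route instead, you need the identity, valid at every root, $p'(z)=(2z^3+z^2+a)/z=[A+z^3(1-2z)]/(z(1-z))$. It shows the branch stays simple while $A\ge0$ and $z<\tfrac12$. Together with your elimination and a continuity argument in $h$, it yields existence, $A>0$ on $[\hL,\hR)$, and root selection at $\hR$.

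The rest of your plan is sound and in places simpler than the paper's. $q>0$ gives $z^2>a$ for every positive root at once. Also $b>0$ follows directly from $1-z^2-hz>1-z^2-\tfrac32z=\tfrac12(1-2z)(2+z)\ge0$.

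There are two small slips. First, $\hR<1/(1-\delta)$ is false for small $\delta$; you only need $2h(1-\delta)<3$, which follows from $h<\tfrac32$. Second, at $z=\tfrac12$ the bound from $A\ge0$ gives only $z+a\ge0$, so use $z+a=\tfrac32-h$ there.
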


\begin{proof}
We first give a convenient parameterization of the upper threshold.
For $0\leq t\leq\tfrac12$, define
\begin{equation}\label{eq:threshold-functions}
 D(t)=\frac{1-2t}{1-t+t^2},\qquad
 H(t)=1+\frac{t^2}{1-t}=\frac{1-t+t^2}{1-t}.
\end{equation}
All denominators here are positive.  Direct differentiation gives
\begin{equation}\label{eq:threshold-derivatives}
 D'(t)=\frac{2t^2-2t-1}{(1-t+t^2)^2}<0,
 \qquad
 H'(t)=\frac{t(2-t)}{(1-t)^2}>0\quad(0<t\leq\tfrac12).
\end{equation}
Indeed, $2t^2-2t=2t(t-1)\leq0$ on this interval, which proves the
first sign.  Since $D(0)=1$ and $D(\tfrac12)=0$, there is a unique
$r\in(0,\tfrac12)$ with $D(r)=\delta$.  Solving
$\delta r^2+(2-\delta)r+\delta-1=0$ yields
\begin{equation}\label{eq:right-root-r}
 r=\frac{\sqrt{4-3\delta^2}+\delta-2}{2\delta}.
\end{equation}
The positivity of this expression also follows without comparison of
radicals from the preceding existence argument.

We check that $H(r)=\hR$.  Put $B=1-r+r^2>0$.  The identity
\[
 4B^2-3(1-2r)^2=(1+2r-2r^2)^2
\]
and the positivity of $1+2r-2r^2$ imply
\[
 \sqrt{4-3\delta^2}=\frac{1+2r-2r^2}{B}.
\]
Substituting this expression and $\delta=(1-2r)/B$ into the formula
for $\hR$ gives $\hR=B/(1-r)=H(r)$.  Consequently,
\begin{equation}\label{eq:threshold-ranges}
 1<\hR<\tfrac32,\qquad \tfrac34<\hL<\hR.
\end{equation}
The first pair of inequalities follows from the strict increase of $H$,
$H(0)=1$, and $H(\tfrac12)=\tfrac32$.  Also $\hL>\tfrac34$ because
$\delta<1$.  For the remaining comparison, the exact calculation
\[
 \frac{\hR}{\hL}
 =\frac{2(1+\delta)(1-r+r^2)}{3(1-r)}
 =\frac{2(2-3r+r^2)}{3(1-r)}
 =\frac{2(2-r)}{3}>1
\]
uses $r<\tfrac12$.

Next, we locate the relevant root of $p$.  For all $t\geq0$,
\begin{equation}\label{eq:p-convex}
 p''(t)=6t+2>0.
\end{equation}
Thus $p$ is strictly convex on the nonnegative real axis, and $p'$ is
strictly increasing there.  In particular, $p$ has at most two
nonnegative zeros: three distinct zeros would, by Rolle's theorem,
give two distinct zeros of the strictly increasing derivative.
At the common upper comparison point,
\begin{equation}\label{eq:p-half}
 p(\tfrac12)
 =-\frac{\bigl(2h(1-\delta)-3\bigr)
               \bigl(2h(1+\delta)-3\bigr)}{8}\geq0.
\end{equation}
The first factor is strictly negative because
$h\leq \hR<\tfrac32$ and $1-\delta<1$; the second is nonnegative
because $h\geq \hL$.  Equality in \eqref{eq:p-half} therefore occurs
if and only if $h=\hL$.

\emph{Case $h<1$.}
Here $a>0$.  Since $h>\tfrac34$, we have $a<\tfrac14$, so
$q:=\sqrt a$ lies in $(0,\tfrac12)$.  Substitution into
\eqref{eq:parameter-polynomial}, using $q^2=a$, gives
\begin{equation}\label{eq:p-sqrt-a}
 p(q)=-(1-\delta^2)h^2q<0.
\end{equation}
Equations \eqref{eq:p-half} and \eqref{eq:p-sqrt-a}, together with
continuity, give a root $z\in(q,\tfrac12]$.
There is no larger positive root.  To see this directly, strict
convexity gives
\[
 p'(z)>\frac{p(z)-p(q)}{z-q}>0.
\]
The derivative remains positive for $t>z$, hence $p(t)>0$ there.
We have $z^2>a$, and $A=a(1-z)+z^2>0$ since both summands are
positive.

\emph{Case $h=1$.}
Now $a=0$, and
\[
 p(t)=t\bigl(t^2+t-(1-\delta^2)\bigr).
\]
The quadratic factor is negative at zero, strictly increasing for
$t\geq0$, and tends to positive infinity.  Its unique positive root is
\begin{equation}\label{eq:root-at-one}
 z=\frac{\sqrt{5-4\delta^2}-1}{2}>0.
\end{equation}
It is the largest real root of $p$.  Because $\hL\leq1$ implies
$\delta\geq\tfrac12$, we have $\sqrt{5-4\delta^2}\leq2$ and
therefore $z\leq\tfrac12$.  In this case $z^2-a=A=z^2>0$.

\emph{Case $h>1$.}
Here $a<0$.  The equation
\begin{equation}\label{eq:beta-equation}
 \beta^2+a(1-\beta)=0
\end{equation}
has a unique positive solution
\[
 \beta=\frac{a+\sqrt{a^2-4a}}2.
\]
Equivalently, $h=H(\beta)$.  Since $1<h\leq H(r)$ and $H$ is
strictly increasing, we have $0<\beta\leq r<\tfrac12$.
The decrease of $D$ now gives $\delta=D(r)\leq D(\beta)$, with
equality precisely when $h=\hR$.

Using $a=-\beta^2/(1-\beta)$ and
$h=(1-\beta+\beta^2)/(1-\beta)$, substitution in $p$ gives
\begin{align}
 p(\beta)
 &=\beta h^2\delta^2
   -\frac{\beta(1-2\beta)^2}{(1-\beta)^2}\notag\\
 &=\beta h^2\bigl(\delta^2-D(\beta)^2\bigr)\leq0.
                                                        \label{eq:p-beta}
\end{align}
If $h<\hR$, this inequality is strict.  Continuity and
\eqref{eq:p-half} then give a root $z\in(\beta,\tfrac12]$.
As in the first case, the strict positivity of the secant slope from
$\beta$ to $z$ implies $p'(z)>0$, and the increase of $p'$ excludes
every root greater than $z$.

If $h=\hR$, then $p(\beta)=0$.  We must verify that this root is the
larger root, rather than merely infer it from \eqref{eq:p-beta}.
From $p(\beta)=0$ we obtain
\[
 -a-(1-\delta^2)h^2=-\beta^2-\beta+\frac{a}{\beta}.
\]
It follows that
\begin{align*}
 p'(\beta)
 &=3\beta^2+2\beta-a-(1-\delta^2)h^2\\
 &=2\beta^2+\beta+\frac a\beta
 =2\beta^2+\beta-\frac{\beta}{1-\beta}
 =\frac{\beta^2(1-2\beta)}{1-\beta}>0.
\end{align*}
Therefore $p$ is strictly increasing and positive beyond $\beta$,
and its largest positive root is $z=\beta$.
For $t\geq0$, the function $t^2+a(1-t)$ has derivative $2t-a>0$.
Together with \eqref{eq:beta-equation}, this proves $A\geq0$, with
equality exactly when $z=\beta$, equivalently when $h=\hR$.
Also $z^2-a>0$ follows immediately from $a<0$.

The three cases prove the existence of the largest positive root, its
bound $0<z\leq\tfrac12$, the inequality $z^2-a>0$, and the assertion
about equality in $A\geq0$.  They also show that $p$ is positive at
every point larger than $z$.  Consequently $p(\tfrac12)=0$ if and
only if $z=\tfrac12$; by \eqref{eq:p-half}, this is equivalent to
$h=\hL$.

We finish the remaining sign checks.  The inequality $A\geq0$ gives
\[
 a\geq-\frac{z^2}{1-z}.
\]
If $z<\tfrac12$, it follows that
\[
 z+a\geq z-\frac{z^2}{1-z}
       =\frac{z(1-2z)}{1-z}>0.
\]
If $z=\tfrac12$, then $h=\hL<\tfrac32$, and
$z+a=\tfrac32-\hL>0$.  In all cases $z-a>0$ as well, because
$z-a=(z-z^2)+(z^2-a)$ and each summand is positive for
$0<z\leq\tfrac12$.

For completeness, let $P(\lambda)$ denote the left-hand side of the
source cubic \eqref{eq:source-cubic}.  Expanding the substitution gives
the exact polynomial identity
\begin{equation}\label{eq:cubic-change-of-variable}
 (1+\lambda)^3p\!\left(\frac{\lambda}{1+\lambda}\right)
 =P(\lambda).
\end{equation}
In particular, $\lambda=z/(1-z)>0$ is a real root of $P$.
Every positive real root $\lambda'$ of $P$ maps to a positive root
$z'=\lambda'/(1+\lambda')$ of $p$.
The latter cannot exceed $z$, because $z$ is the largest positive root.
The map $t\mapsto t/(1-t)$ is strictly increasing on $(0,1)$, since
its derivative is $(1-t)^{-2}>0$.  Thus $\lambda'\leq\lambda$.
Every nonpositive real root of $P$ is also smaller than $\lambda$,
which proves the claimed correspondence with the largest real root.

Finally, substitution of $\lambda=z/(1-z)$ in
\eqref{eq:parameter-b-k} yields
\[
 2b=\frac{1-hz-z^2}{1-z}.
\]
If $z<\tfrac12$, using $h=1-a$ and the bound on $a$ gives
\begin{align*}
 1-hz-z^2
 &=1-z-z^2+az\\
 &\geq1-z-z^2-\frac{z^3}{1-z}
 =\frac{1-2z}{1-z}>0.
\end{align*}
If $z=\tfrac12$, then $h=\hL$, and direct substitution gives
$2b=\tfrac32-h=h\delta>0$.  Hence $b>0$ in every case.
The expression for $k$ is positive by $z+a>0$ and $0<z<1$.
Expanding its numerator verifies
\[
 h-k=1-a-\frac{(z+a)(1-z)}z
     =\frac{z^2-a}{z}>0.
\]
This proves all assertions of the lemma.
\end{proof}

%% file: sections/upper.tex
\section{A self-contained upper-bound certificate}\label{sec:upper}

We first normalize $L=1$. To justify this without changing the error
model, set $F=f/L$ and $d_F=d/L$. Then $F$ is convex and
$1$-smooth,
\[
 \norm{d_F-\nabla F(x_0)}\le\delta\norm{\nabla F(x_0)},
 \qquad x_1=x_0-hd_F.
\]
Moreover $\norm{\nabla F(x_1)}^2/(F(x_0)-F_\ast)
=\norm{\nabla f(x_1)}^2/[L(f(x_0)-f_\ast)]$, and the same
calculation holds with the objective decrease in the denominator.
It therefore suffices to establish the result for $L=1$.

\subsection{The elementary inequalities}
\begin{lemma}[Smooth convex two-point inequality]\label{lem:interpolation}
If $f$ is differentiable, convex, and $1$-smooth, then for every $x,y$,
\begin{equation}\label{eq:interpolation}
 f(x)-f(y)-\ip{\nabla f(y)}{x-y}
 \ge\frac12\norm{\nabla f(x)-\nabla f(y)}^2.
\end{equation}
If $f_\ast$ is attained, then
\begin{equation}\label{eq:gradient-gap}
 f(x)-f_\ast\ge\frac12\norm{\nabla f(x)}^2.
\end{equation}
\end{lemma}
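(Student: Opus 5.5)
The plan is the classical proof of the co-coercivity inequality, in three steps: the descent lemma, a reduction to a minimizer-at-zero auxiliary function, and a final specialization to a minimizer.

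\emph{Step 1 (descent lemma).} For a $1$-smooth function $g$ and any $x,w$,
\[
 g(w)\le g(x)+\ip{\nabla g(x)}{w-x}+\tfrac12\norm{w-x}^2 .
\]
This follows by writing
\[
 g(w)-g(x)-\ip{\nabla g(x)}{w-x}=\int_0^1\ip{\nabla g(x+t(w-x))-\nabla g(x)}{w-x}\,dt,
\]
then applying Cauchy--Schwarz and the Lipschitz bound on $\nabla g$, which gives at most $\int_0^1 t\norm{w-x}^2dt=\tfrac12\norm{w-x}^2$. The integral representation is justified by the fundamental theorem of calculus applied to $t\mapsto g(x+t(w-x))$, which is $C^1$ because $\nabla g$ is continuous.

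\emph{Step 2 (reduction to a minimized function).} Fix $y$ and define
\[
 \phi(w)=f(w)-\ip{\nabla f(y)}{w}.
\]
Then $\phi$ is convex and $1$-smooth, with $\nabla\phi(w)=\nabla f(w)-\nabla f(y)$. Hence $\nabla\phi(y)=0$, and convexity makes $y$ a global minimizer of $\phi$. Apply Step 1 to $\phi$ at the point $x$ with $w=x-\nabla\phi(x)$:
\[
 \phi(y)\le\phi(w)\le\phi(x)-\norm{\nabla\phi(x)}^2+\tfrac12\norm{\nabla\phi(x)}^2=\phi(x)-\tfrac12\norm{\nabla\phi(x)}^2 .
\]
Rearranging, $\phi(x)-\phi(y)\ge\tfrac12\norm{\nabla f(x)-\nabla f(y)}^2$. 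Since
\[
 \phi(x)-\phi(y)=f(x)-f(y)-\ip{\nabla f(y)}{x-y},
\]
this is exactly \eqref{eq:interpolation}.

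\emph{Step 3 (the gap inequality).} For \eqref{eq:gradient-gap}, take $y=x_\ast$ to be a minimizer, which exists because $f_\ast$ is attained. Then $\nabla f(x_\ast)=0$ by differentiability, and $f(x_\ast)=f_\ast$. Substituting into \eqref{eq:interpolation} gives $f(x)-f_\ast\ge\tfrac12\norm{\nabla f(x)}^2$.

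There is no serious obstacle here. The only point needing care is Step 2: convexity is used only to conclude that a stationary point of $\phi$ is a global minimizer. This is the step that upgrades the bare descent lemma to the gradient-difference lower bound, so it should be stated explicitly rather than left implicit.
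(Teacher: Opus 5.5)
Your proposal is correct and follows essentially the same route as the paper: the descent lemma via the fundamental theorem of calculus, applied to $\phi(w)=f(w)-\ip{\nabla f(y)}{w}$ at the point $x-\nabla\phi(x)$, with convexity making $y$ a global minimizer of $\phi$, and then the specialization $y=x_\ast$ using $\nabla f(x_\ast)=0$. Your extra remark that $t\mapsto g(x+t(w-x))$ is $C^1$ justifies the integral representation, which the paper leaves implicit.
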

\begin{proof}
For any differentiable function $\phi$ with $1$-Lipschitz gradient,
the fundamental theorem of calculus gives, for a vector $w$,
\begin{align*}
 \phi(x+w)-\phi(x)-\ip{\nabla\phi(x)}w
 &=\int_0^1\ip{\nabla\phi(x+tw)-\nabla\phi(x)}w\,dt\\
 &\le\int_0^1 t\norm w^2\,dt=\frac12\norm w^2.
\end{align*}
Here Cauchy--Schwarz and the Lipschitz assumption give the inequality.
Taking $w=-\nabla\phi(x)$ shows
\[
 \phi(x-\nabla\phi(x))\le\phi(x)-\tfrac12\norm{\nabla\phi(x)}^2.
\]
Apply this to $\phi(t)=f(t)-\ip{\nabla f(y)}t$.
Convexity implies
$f(t)\ge f(y)+\ip{\nabla f(y)}{t-y}$, so $y$ is a global
minimizer of $\phi$. Hence
\[
 \phi(y)\le\phi(x-\nabla\phi(x))
 \le\phi(x)-\tfrac12\norm{\nabla\phi(x)}^2,
\]
which is \eqref{eq:interpolation} after substitution.
At a minimizer $x_\ast$, differentiability implies
$\nabla f(x_\ast)=0$: otherwise moving a sufficiently small distance
in direction $-\nabla f(x_\ast)$ decreases $f$.
Set $y=x_\ast$ in \eqref{eq:interpolation} to obtain
\eqref{eq:gradient-gap}.
\end{proof}

Write $f_i=f(x_i)$, $g_i=\nabla f(x_i)$, and $e=d-g_0$.
Since $x_0-x_1=hd$, define four nonnegative quantities
\begin{align}
 Q_{01}&=f_0-f_1-h\ip{g_1}d-\tfrac12\norm{g_0-g_1}^2,
       \label{eq:slacks}\\
 Q_{10}&=f_1-f_0+h\ip{g_0}d-\tfrac12\norm{g_0-g_1}^2,\notag\\
 \Qstar&=f_1-f_\ast-\tfrac12\norm{g_1}^2,\notag\\
 Q_e&=\delta^2\norm{g_0}^2-\norm e^2.\notag
\end{align}
Their nonnegativity follows from Lemma~\ref{lem:interpolation} and
the relative error condition.

\subsection{The identity}
Recall $a=1-h$. The cubic \eqref{eq:p} is equivalently
\begin{equation}\label{eq:root-identity}
 h^2\delta^2z=(z+a)(1-hz-z^2).
\end{equation}
By Lemma~\ref{lem:parameter}, $0<z\le1/2$ and $z+a>0$.
Define
\begin{equation}\label{eq:Bw}
 B=\frac{h^2z}{2(1-z)(z+a)},\qquad
 w=hz e-(z+a)(z g_0-g_1).
\end{equation}
In particular $B>0$.

\begin{lemma}[Nonnegative-square certificate]\label{lem:certificate}
For every pair of points related by $x_1=x_0-hd$,
\begin{align}
 f_0-f_\ast-D\norm{g_1}^2
 ={}&\frac{Q_{01}}{1-z}+\frac{zQ_{10}}{1-z}
       +B Q_e+\Qstar\notag\\
    &+\frac{\norm w^2}{2z(1-z)(z+a)}.
       \label{eq:certificate}
\end{align}
No inequality is used in this identity.
\end{lemma}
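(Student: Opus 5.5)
The plan is a direct algebraic verification: expand both sides of \eqref{eq:certificate} as functions of the scalar unknowns $f_0,f_1,f_\ast$ and of the Gram matrix of the three vectors $g_0,g_1,e$, and match coefficients. First I substitute $d=g_0+e$ into $Q_{01}$ and $Q_{10}$, so that every term is a linear combination of $f_0,f_1,f_\ast$ and the six inner products $\norm{g_0}^2,\norm{g_1}^2,\norm e^2,\ip{g_0}{g_1},\ip{g_0}{e},\ip{g_1}{e}$. Since the identity must hold for arbitrary vectors, it is enough to match the coefficients of these nine monomials.

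The function-value coefficients come first and are immediate. On the right, $f_0$ appears with coefficient $\frac{1}{1-z}-\frac{z}{1-z}=1$, $f_1$ with $-\frac1{1-z}+\frac{z}{1-z}+1=0$, and $f_\ast$ with $-1$; these agree with the left-hand side. This fixes the multipliers $1/(1-z)$ and $z/(1-z)$, and the multiplier $1$ on $\Qstar$.

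Next I match the quadratic terms. Set $N=2z(1-z)(z+a)$ and expand $\norm w^2/N$, where $w=hze-(z+a)zg_0+(z+a)g_1$. The $\norm{g_1}^2$ coefficient on the right is $-\frac12\cdot\frac{1+z}{1-z}-\frac12+\frac{(z+a)^2}{N}=-\frac{1}{1-z}+\frac{z+a}{2z(1-z)}=\frac{a-z}{2z(1-z)}$, which is exactly $-D$ by \eqref{eq:constants}. The $\ip{g_0}{g_1}$ coefficient is $\frac{1+z}{1-z}-\frac{h}{1-z}-\frac{2z(z+a)^2}{N}$, and with $h=1-a$ this vanishes. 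The $\ip{g_1}{e}$ coefficient is $-\frac{h}{1-z}+\frac{2hz(z+a)}{N}=0$. The $\ip{g_0}{e}$ coefficient is $\frac{hz}{1-z}-\frac{2hz^2(z+a)}{N}=\frac{hz}{1-z}-\frac{hz}{1-z}=0$. The $\norm{g_0}^2$ coefficient is $-\frac{1+z}{2(1-z)}+\frac{hz}{1-z}+B\delta^2+\frac{z^2(z+a)^2}{N}$. The last term equals $\frac{z(z+a)}{2(1-z)}$, so this coefficient is $B\delta^2+\frac{-1-z+2hz+z^2+az}{2(1-z)}=B\delta^2-\frac{(z+a)(1-hz-z^2)}{h^2z}\cdot\frac{h^2z}{2(1-z)(z+a)}$. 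The last step uses the factorization $-1-z+2hz+z^2+az=-(1-hz-z^2)+\ldots$, which I will check carefully. Finally, the $\norm e^2$ coefficient is $-B+\frac{h^2z^2}{N}=-B+B=0$.

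The main obstacle is the $\norm{g_0}^2$ coefficient. It is the only place where the cubic enters, through \eqref{eq:root-identity}: the coefficient must reduce to $B\bigl(\delta^2-\frac{(z+a)(1-hz-z^2)}{h^2z}\bigr)$, which is zero precisely because $h^2\delta^2z=(z+a)(1-hz-z^2)$. I would first verify by hand that $-1-z+2hz+z^2+az$ equals $-(1-hz-z^2)$ after using $a=1-h$. It should, since $2hz+az-z=hz$ and the $z^2$ terms combine. If a sign discrepancy appears, I would recheck the sign convention in $Q_{10}$.

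I would also confirm once that \eqref{eq:root-identity} is equivalent to \eqref{eq:p}, by expanding $(z+a)(1-hz-z^2)-h^2\delta^2z$ and comparing it with $-p(z)$. Throughout, all denominators are nonzero by Lemma~\ref{lem:parameter}.
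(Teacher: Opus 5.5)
Your proposal is correct and follows the paper's route: substitute $d=g_0+e$, expand in the six Gram quantities $\norm{g_0}^2,\norm{g_1}^2,\norm e^2,\ip{g_0}{g_1},\ip{g_0}{e},\ip{g_1}{e}$, check that the cross terms and the $\norm e^2$ term cancel, and use \eqref{eq:root-identity} in the form $B\delta^2=(1-hz-z^2)/(2(1-z))$ to make the $\norm{g_0}^2$ coefficient vanish. The only differences are cosmetic: you include $\Qstar$ from the start and get $-D$ directly, while the paper first gets $-\rho$ and then adds $\Qstar$, which also yields the decrease bound. Your hedge ``$+\ldots$'' is unnecessary, since $-1-z+2hz+z^2+az=-(1-hz-z^2)$ holds exactly when $a=1-h$.
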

\begin{proof}
For completeness we expand all quadratic terms. Put
$G=\norm{g_0}^2$, $J=\norm{g_1}^2$, $E=\norm e^2$,
$C=\ip{g_0}{g_1}$, $U=\ip{g_0}e$, and $V=\ip{g_1}e$.
Using $d=g_0+e$, we get
\begin{align*}
 \frac{Q_{01}+zQ_{10}}{1-z}
 =f_0-f_1
 &+\frac{hz-(1+z)/2}{1-z}G
  -\frac{1+z}{2(1-z)}J\\
 &+\frac{z+a}{1-z}C
  +\frac{hz}{1-z}U-\frac h{1-z}V.
\end{align*}
Expanding the square in \eqref{eq:Bw} gives
\begin{align*}
 \frac{\norm w^2}{2z(1-z)(z+a)}
 ={}&\frac{z(z+a)}{2(1-z)}G
    +\frac{z+a}{2z(1-z)}J+BE\\
 &-\frac{z+a}{1-z}C
    -\frac{hz}{1-z}U+\frac h{1-z}V.
\end{align*}
The terms in $C,U,V$ cancel. Adding $BQ_e=B\delta^2G-BE$
cancels the terms in $E$. By \eqref{eq:root-identity},
\[
 B\delta^2=\frac{1-hz-z^2}{2(1-z)}.
\]
The remaining coefficient of $G$ has numerator
\[
 2hz-(1+z)+z(z+a)+(1-hz-z^2)
 =z(h-1+a)=0
\]
over denominator $2(1-z)$. The coefficient of $J$ is
\[
 -\frac{1+z}{2(1-z)}+\frac{z+a}{2z(1-z)}
 =\frac{a-z^2}{2z(1-z)}=-\rho.
\]
Thus the sum of the weighted $Q_{01},Q_{10}$, the error slack,
and the square equals $f_0-f_1-\rho J$.
Adding $\Qstar=f_1-f_\ast-J/2$ gives
$f_0-f_\ast-(\rho+1/2)J=f_0-f_\ast-DJ$, proving the identity.
\end{proof}

Every term on the right of \eqref{eq:certificate} is nonnegative.
Consequently $f_0-f_\ast\ge D\norm{g_1}^2$.
The same identity without $\Qstar$ gives
$f_0-f_1\ge\rho\norm{g_1}^2$.
Since $D,\rho>0$, division proves \eqref{eq:gap-bound} and
\eqref{eq:decrease-bound} for $L=1$. The normalization at the start
of this section proves them for all $L>0$.

\begin{remark}[Equality conditions]\label{rem:equality}
All coefficients in \eqref{eq:certificate} are strictly positive.
Equality in the gap bound is therefore equivalent to
$Q_{01}=Q_{10}=Q_e=\Qstar=0$ and $w=0$.
Equality in the decrease bound requires the same conditions except
for $\Qstar=0$. These statements follow because a sum of finitely
many nonnegative real numbers can vanish only when each term vanishes.
\end{remark}

%% file: sections/construction.tex
\section{Verification of the extremizer}\label{sec:construction}

We prove attainment using exactly the function and points in
\eqref{eq:extremizer}. The calculation applies on the closed
intermediate interval, so no limiting argument is needed at an endpoint.

\subsection{Smoothness, convexity, and the step}
For $c>0$, the derivative of \eqref{eq:huber} is
\[
 H_c'(y)=\begin{cases}-c,&y<-c,\\y,&-c\le y\le c,\\c,&y>c.
 \end{cases}
\]
The one-sided derivatives agree at $\pm c$. This derivative is
nondecreasing and $1$-Lipschitz. To check the latter across the break
points, note that the map $y\mapsto\min\{c,\max\{-c,y\}\}$
never increases the distance between two real numbers: its increase
between ordered arguments is between zero and the increase of the
arguments. A differentiable function with nondecreasing derivative
is convex, as follows by integrating its derivative on any interval.
Also $H_c(y)\ge0$ and $H_c(0)=0$.
When $c=0$, all these claims hold for $H_0\equiv0$.

It follows that $f(x,y)=x^2/2+H_c(y)$ is differentiable and convex,
and
\begin{align*}
 \norm{\nabla f(x,y)-\nabla f(\hat x,\hat y)}^2
 &=(x-\hat x)^2+\bigl(H_c'(y)-H_c'(\hat y)\bigr)^2\\
 &\le(x-\hat x)^2+(y-\hat y)^2.
\end{align*}
Thus it is $1$-smooth and has minimum zero at $(0,0)$.
The first coordinate ensures that the Lipschitz constant can be
chosen to be exactly one, including when $c=0$.

Lemma~\ref{lem:parameter} gives $s\ge0$, so the second coordinate
of $x_0$ lies in the linear part of $H_c$ and that of $x_1$ lies
at its junction with the quadratic part. Hence
\begin{equation}\label{eq:gradients-values}
 g_0=(v,c),\quad g_1=(u,c),\quad
 f_0=\frac{v^2+c^2}{2}+cs,\quad
 f_1=\frac{u^2+c^2}{2}.
\end{equation}
These formulas remain true at $c=0$.
The definition of $d$ in \eqref{eq:extremizer} immediately gives
$x_0-hd=x_1$.

\subsection{The exact relative error}
Introduce
\begin{equation}\label{eq:k}
 k=\frac{(1-z)(z+a)}{z}.
\end{equation}
Then
\[
 h-k=1-a-\frac{(1-z)(z+a)}z
     =\frac{z^2-a}{z},
\]
so $s=(h-k)c$. With $T=\sqrt{1-2z}$, the error is therefore
\begin{equation}\label{eq:error-coordinates}
 e=d-g_0
 =\left(\frac{a^2-z^2}{h}T,-\frac{k}{h}c\right).
\end{equation}
For the first component we used
$v-u-hv=(1-h)v-u=av-u=(a^2-z^2)T$.

Here is a direct verification of its norm. Put
\[
 A=a(1-z)+z^2,\qquad J_0=a(1-2z)+z^2.
\]
By \eqref{eq:uv}--\eqref{eq:cs},
\begin{align}
 \norm{g_0}^2
 &=a^2(1-2z)+zA\notag\\
 &=a^2(1-2z)+az(1-z)+z^3
   =(a+z)J_0.\label{eq:g0-factor}
\end{align}
On the other hand, \eqref{eq:error-coordinates} gives
\begin{align*}
 h^2\norm e^2
 &=(a^2-z^2)^2(1-2z)+\frac{(1-z)^2(z+a)^2}{z^2}\,zA\\
 &=\frac{(a+z)^2}{z}
   \left[z(a-z)^2(1-2z)+(1-z)^2A\right].
\end{align*}
The bracket factors as
\begin{align*}
 &z(a-z)^2(1-2z)+(1-z)^2\bigl[a(1-z)+z^2\bigr]\\
 &\hspace{1cm}=(1-z-z^2+az)\bigl[a(1-2z)+z^2\bigr].
\end{align*}
For an explicit check, the left side, after expansion, is
\[
 a^2z-2a^2z^2+a(1-3z+z^2+3z^3)+z^2-z^3-z^4,
\]
and multiplying the two factors on the right gives the same expression.
Combining this identity with \eqref{eq:g0-factor} yields
\[
 h^2\norm e^2
 =\frac{(a+z)(1-z-z^2+az)}z\norm{g_0}^2.
\]
Since $h=1-a$, the factor $1-z-z^2+az$ is $1-hz-z^2$.
Equation~\eqref{eq:root-identity} now gives
\begin{equation}\label{eq:error-exact}
 \norm{d-g_0}^2=\delta^2\norm{g_0}^2.
\end{equation}
Thus the direction is admissible and uses the entire error budget.

\subsection{Attainment of both constants}
Define
\begin{equation}\label{eq:W}
 W=a+z^2+2z^3.
\end{equation}
Directly from \eqref{eq:uv} and \eqref{eq:cs},
\begin{align}
 \norm{g_1}^2
 &=z^4(1-2z)+z\bigl[a(1-z)+z^2\bigr]\notag\\
 &=az-az^2+z^3+z^4-2z^5
   =z(1-z)W.\label{eq:g1-factor}
\end{align}
Similarly, since $cs=(z^2-a)A$,
\begin{align}
 2f_0
 &=a^2(1-2z)+zA+2(z^2-a)A\notag\\
 &=-a^2+az-az^2-2az^3+z^3+2z^4
   =(z-a)W.\label{eq:f0-factor}
\end{align}
Subtracting $2f_1=z(1-z)W$ from \eqref{eq:f0-factor} gives
\begin{equation}\label{eq:decrease-factor}
 2(f_0-f_1)=(z^2-a)W.
\end{equation}

The final gradient is nonzero throughout the interval. If $h<\hR$,
Lemma~\ref{lem:parameter} gives $c>0$, which suffices. If $h=\hR$,
then $z=r\in(0,1/2)$ and $u=z^2\sqrt{1-2z}>0$.
Consequently \eqref{eq:g1-factor} and $z(1-z)>0$ imply $W>0$.
The same lemma gives $z-a>0$ and $z^2-a>0$; hence
\eqref{eq:f0-factor} and \eqref{eq:decrease-factor} prove
$f_0>0$ and $f_0-f_1>0$. Division is legitimate and yields
\begin{align*}
 \frac{\norm{g_1}^2}{f_0-f_\ast}
 &=\frac{2z(1-z)}{z-a}=\mathcal C,\\
 \frac{\norm{g_1}^2}{f_0-f_1}
 &=\frac{2z(1-z)}{z^2-a}=\mathcal C_{\rm dec}.
\end{align*}
Together with the upper bounds, these equalities prove
Theorem~\ref{thm:main} for $L=1$.
For general $L$, $f_L=Lf$ has gradient $L\nabla f$, and $d_L=Ld$
satisfies the same relative error condition. The step with coefficient
$h/L$ has the same two iterates. Both the numerator
$\norm{\nabla f_L(x_1)}^2/L$ and each objective difference are
multiplied by $L$, completing the proof for arbitrary $L$.

\subsection{Endpoints and the interior}
At $h=\hL$, the parameter lemma gives $z=1/2$ and $T=0$.
Thus $u=v=0$ and
\[
 c^2=\frac{3-2h}{8}=\frac{3\delta}{8(1+\delta)}>0,
 \qquad \frac{s}{c}=\frac12-2a=2h-\frac32=h(1-\delta).
\]
The last equality follows from $h(1+\delta)=3/2$.
The direction is therefore $(0,(1-\delta)c)$ and the construction
reduces to a Huber example. Substitution into the constants gives
\[
 D=h(1-\delta)+\tfrac12,\qquad \rho=h(1-\delta).
\]

At $h=\hR$, the parameter is $z=r\in(0,1/2)$,
$a=-r^2/(1-r)$, and $c=s=0$.
Here $v\ne0$ and
\[
 \frac uv=\frac{r^2}{a}=-(1-r),\qquad
 \frac{d_x}{v}=\frac{1-u/v}{h}=\frac{2-r}{h}=1+\delta.
\]
Thus the example reduces to a quadratic with an oversized gradient
step. Its gap constant is
$\mathcal C=2(1-r)^2=2[1-h(1+\delta)]^2$.

In the open interval $(\hL,\hR)$, both $T$ and $c$ are positive.
The two gradients are linearly independent, because
\[
 \det\begin{pmatrix}v&u\\c&c\end{pmatrix}
 =c(v-u)=c(a-z^2)T\ne0.
\]
This proves that the displayed example itself uses two directions.
Section~\ref{sec:dimension} proves the stronger claim that no
one-dimensional example can attain the same constant there.

%% file: sections/dimension.tex
\section{The exact one-dimensional envelope and the necessity of two dimensions}
\label{sec:dimension}

The construction in the preceding section uses two coordinates. We now show
that two coordinates are necessary at every stepsize strictly inside the
intermediate regime. We first compute the exact worst-case value in one
dimension. This also distinguishes the dimensional statement from the
existence of a particular two-dimensional example.

\begin{proposition}[Exact one-dimensional worst-case value]
\label{prop:one-dimensional}
Fix $0<\delta<1$ and $0\leq h\leq 2/(1+\delta)$. Among differentiable convex
functions $f:\mathbb R\to\mathbb R$ with $1$-Lipschitz derivative and an
attained minimum $f_*$, initial points with $f(x_0)>f_*$, and directions
satisfying
\[
 |d-f'(x_0)|\leq\delta|f'(x_0)|,
 \qquad x_1=x_0-hd,
\]
the maximum of $|f'(x_1)|^2/(f(x_0)-f_*)$ is
\begin{equation}
 W_1(h,\delta)=
 \max\left\{
 \frac{1}{h(1-\delta)+\tfrac12},
 \;2\bigl(1-h(1+\delta)\bigr)^2
 \right\}.
 \label{eq:one-dimensional-envelope}
\end{equation}
The maximum is attained. For an $L$-smooth function the same expression is
the maximum of $|f'(x_1)|^2/[L(f(x_0)-f_*)]$ for the update
$x_1=x_0-(h/L)d$.
\end{proposition}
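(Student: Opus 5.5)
The plan is to normalize $L=1$ exactly as at the start of Section~\ref{sec:upper}, prove the bound $|f'(x_1)|^2\le W_1(h,\delta)(f(x_0)-f_*)$ by a short case analysis on the sign of $f'(x_1)$, and then exhibit one-dimensional attaining examples for each of the two terms in \eqref{eq:one-dimensional-envelope}.

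\emph{Reductions.} Write $g_0=f'(x_0)$ and $g_1=f'(x_1)$. If $g_0=0$, the error condition forces $d=0$ and $x_1=x_0$, so $g_1=0$ and the ratio vanishes. Replacing $f(x)$ by $f(-x)$ and $d$ by $-d$ preserves every hypothesis, so I may assume $g_0>0$. The condition $|d-g_0|\le\delta g_0$ means $d\in[(1-\delta)g_0,(1+\delta)g_0]$. Hence the displacement $t=x_0-x_1=hd$ satisfies $h(1-\delta)g_0\le t\le h(1+\delta)g_0$, and in particular $x_1\le x_0$. In one dimension $f'$ is nondecreasing and $1$-Lipschitz, so $g_0-t\le g_1\le g_0$.

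\emph{Case $g_1\ge0$.} Convexity gives $f_0-f_1\ge g_1(x_0-x_1)=g_1t\ge h(1-\delta)g_0g_1$. Inequality \eqref{eq:gradient-gap} of Lemma~\ref{lem:interpolation} gives $f_1-f_*\ge g_1^2/2$. Adding these and using $g_0\ge g_1\ge0$ yields
\[
 f_0-f_*\ge\bigl(h(1-\delta)+\tfrac12\bigr)g_1^2 ,
\]
which is the first term of $W_1$. For attainment I take the Huber function $H_c$ with $c>0$, $x_1=c$, $d=(1-\delta)c$ and $x_0=c+h(1-\delta)c$. This is exactly the $h=\hL$ reduction of Section~\ref{sec:construction}, now with general $h$: $x_0$ lies in the linear part, so $g_0=c$ and the error equals $\delta g_0$. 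Moreover $f_0-f_1=h(1-\delta)c^2$ and $f_1=c^2/2$, so equality holds.

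\emph{Case $g_1<0$.} This is overshoot. The Lipschitz bound gives
\[
 g_1\ge g_0-t\ge g_0\bigl(1-h(1+\delta)\bigr),
\]
so this case can occur only when $h(1+\delta)>1$, and then $|g_1|\le g_0\bigl(h(1+\delta)-1\bigr)$. Combined with $f_0-f_*\ge g_0^2/2$ from \eqref{eq:gradient-gap}, this yields $g_1^2\le 2(1-h(1+\delta))^2(f_0-f_*)$, the second term of $W_1$. For attainment I take $f=x^2/2$, $x_0=1$ and $d=(1+\delta)$. Then $g_1=1-h(1+\delta)$ and $f_0-f_*=1/2$, giving equality in every inequality used.

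\emph{Conclusion.} Both bounds hold in every case, so the supremum is at most the maximum of the two expressions, and each expression is attained with $f(x_0)>f_*$. Hence $W_1$ is a maximum. The hypothesis $h\le2/(1+\delta)$ is not actually needed for the bounds; it only keeps the second branch at most $2$, in the natural range. Undoing the normalization as in Section~\ref{sec:upper} gives the statement for $L$-smooth functions.

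There is no real obstacle here. The only delicate step is using the correct one-sided inequalities in each case. In the first case the gap bound must combine convexity along the step with the bound at $x_1$, rather than use $g_0^2/2$. In the second case the lower bound on $g_1$ must come from the Lipschitz property with the longest admissible step.
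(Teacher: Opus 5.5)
Your proof is correct. It has the same skeleton as the paper's: reflect so that $f'(x_0)>0$, split on the sign of $f'(x_1)$, use the same Huber and quadratic extremizers, and rescale by $L$. Two steps, however, are organized differently.

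The paper first analyzes an exact step $x_1=x_0-tf'(x_0)$ and derives the exact-step envelope $E(t)=\max\{1/(t+\tfrac12),\,2(1-t)^2\}$. Only then does it view the inexact step as an exact one with effective stepsize $t=(x_0-x_1)/f'(x_0)\in[h(1-\delta),h(1+\delta)]$. Maximizing $E$ over that interval creates a third candidate, $2(1-h(1-\delta))^2$. The paper must discard it separately, using the identity $1/(t+\tfrac12)-2(1-t)^2=t^2(3-2t)/(t+\tfrac12)$ or monotonicity when $t>3/2$. You never generate that term, because in the overshoot case you bound $|g_1|$ directly by the longest admissible displacement.

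In the case $g_1\ge0$, the paper uses the full two-point inequality $f_0-f_1\ge t g_0g_1+\tfrac12(g_0-g_1)^2$ (with $t$ the effective stepsize, not your displacement). It then differentiates in $r=g_1/g_0$ to locate the worst case at $r=1$. You use only first-order convexity together with $g_1\le g_0$. The quantities you drop, $\tfrac12(g_0-g_1)^2$ and $t g_1(g_0-g_1)$, both vanish at $g_1=g_0$, which is exactly where the extremizer sits, so no calculus in $r$ is needed.

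The paper's route yields, as a by-product, the exact worst case of an \emph{exact} gradient step for every $t$. Yours is shorter, and it makes transparent that $h\le 2/(1+\delta)$ is needed neither for the bound nor for attainment. The restriction $t\le2$ in the paper's exact-step analysis is not actually invoked either.

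One wording fix: ``both bounds hold in every case'' should say that in each case the corresponding bound holds, so the ratio never exceeds the maximum.
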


\begin{proof}
We first consider an exact step
\[
 x_1=x_0-t f'(x_0),\qquad 0\leq t\leq2.
\]
Since $f(x_0)>f_*$, convexity implies $f'(x_0)\ne0$. Reflecting the real
line if necessary, write $f'(x_0)=A>0$ and $f'(x_1)=b$. The update gives
$x_1\leq x_0$. Monotonicity of the derivative of a differentiable convex
function, together with its Lipschitz property, gives
\[
 b\leq A,
 \qquad A-b\leq x_0-x_1=tA.
\]
Thus
\begin{equation}
 (1-t)A\leq b\leq A.
 \label{eq:scalar-gradient-interval}
\end{equation}

Suppose first that $b<0$. Then \eqref{eq:scalar-gradient-interval} implies
$t>1$ and $|b|\leq(t-1)A$. The smoothness inequality, applied to a unit
gradient step from $x_0$, gives
\[
 f_*\leq f(x_0-A)
 \leq f(x_0)-A^2+\frac{A^2}{2}
 =f(x_0)-\frac{A^2}{2}.
\]
Consequently,
\begin{equation}
 \frac{b^2}{f(x_0)-f_*}
 \leq\frac{2b^2}{A^2}
 \leq2(t-1)^2.
 \label{eq:scalar-negative}
\end{equation}

Suppose next that $b\geq0$. Smooth convex interpolation between $x_0$ and
$x_1$ gives
\[
 f(x_0)-f(x_1)
 \geq f'(x_1)(x_0-x_1)+\frac{(f'(x_0)-f'(x_1))^2}{2}
 =tAb+\frac{(A-b)^2}{2}.
\]
Applying the preceding unit-step smoothness argument at $x_1$ also gives
$f(x_1)-f_*\geq b^2/2$. Adding these two inequalities and putting
$r=b/A\in[0,1]$ yields
\[
 f(x_0)-f_*
 \geq A^2\left(r^2+(t-1)r+\frac12\right),
\]
and therefore
\begin{equation}
 \frac{b^2}{f(x_0)-f_*}
 \leq\frac{r^2}{r^2+(t-1)r+\tfrac12}.
 \label{eq:scalar-positive-ratio}
\end{equation}
The denominator is positive: since $t\geq0$ and $r\geq0$,
\[
 r^2+(t-1)r+\frac12
 \geq r^2-r+\frac12
 =\left(r-\frac12\right)^2+\frac14>0.
\]
Differentiating the right-hand side of \eqref{eq:scalar-positive-ratio}
with respect to $r$ gives
\[
 \frac{r\bigl(1+(t-1)r\bigr)}
 {\left(r^2+(t-1)r+\tfrac12\right)^2}\geq0.
\]
Indeed, if $t\geq1$ the numerator is nonnegative directly, while if
$0\leq t<1$ we have $1+(t-1)r\geq t\geq0$. Its maximum on $[0,1]$ is
therefore attained at $r=1$, so
\begin{equation}
 \frac{b^2}{f(x_0)-f_*}\leq\frac{1}{t+\tfrac12}.
 \label{eq:scalar-positive}
\end{equation}
Combining \eqref{eq:scalar-negative} and \eqref{eq:scalar-positive}, the
exact-step ratio is at most
\[
 E(t)=\max\left\{\frac{1}{t+\tfrac12},\;2(1-t)^2\right\}.
\]

Both expressions have explicit attaining examples. Define
\[
 H(y)=
 \begin{cases}
 y^2/2,& |y|\leq1,\\
 |y|-1/2,& |y|\geq1.
 \end{cases}
\]
The derivative of $H$ is the projection of $y$ onto $[-1,1]$, so $H$ is
convex and has a $1$-Lipschitz derivative. Its minimum is zero. Starting
at $x_0=t+1$ gives $H'(x_0)=1$, $x_1=1$, $H'(x_1)=1$, and
$H(x_0)=t+1/2$. This attains $1/(t+1/2)$. For $f(y)=y^2/2$ and $x_0=1$,
one has $x_1=1-t$, $f(x_0)=1/2$, and $f'(x_1)=1-t$, which attains
$2(1-t)^2$. Thus $E(t)$ is the exact maximum for each $t$.

We now return to a relatively inexact step. Since $f'(x_0)\ne0$, division
of the error condition by $|f'(x_0)|$ gives
\[
 1-\delta\leq \frac{d}{f'(x_0)}\leq1+\delta.
\]
The update is consequently an exact step with effective stepsize
\[
 t=h\frac{d}{f'(x_0)}
 \in[t_-,t_+],
 \qquad t_-=h(1-\delta),\quad t_+=h(1+\delta).
\]
The assumed upper bound on $h$ ensures $0\leq t_-\leq t_+\leq2$.
The function $1/(t+1/2)$ is decreasing. The function $2(1-t)^2$ is convex,
so its maximum on $[t_-,t_+]$ occurs at an endpoint. It follows that
\[
 \max_{t\in[t_-,t_+]}E(t)
 =\max\left\{
 \frac{1}{t_-+\tfrac12},\;2(1-t_-)^2,\;2(1-t_+)^2
 \right\}.
\]
The middle term can be omitted. If $t_-\leq3/2$, the identity
\[
 \frac{1}{t_-+\tfrac12}-2(1-t_-)^2
 =\frac{t_-^2(3-2t_-)}{t_-+\tfrac12}\geq0
\]
shows that the first term dominates it. If $t_->3/2$, the function
$2(1-t)^2$ is increasing on $[t_-,t_+]$, so the last term dominates it.
This proves the upper bound \eqref{eq:one-dimensional-envelope}.

To attain the first term in that formula, use the Huber example with
$x_0=t_-+1$ and $d=(1-\delta)H'(x_0)=1-\delta$. To attain the second term,
use $f(y)=y^2/2$, $x_0=1$, and $d=(1+\delta)f'(x_0)=1+\delta$. Both
choices satisfy the relative-error condition with equality. Choosing the
example corresponding to the larger expression proves attainment of
$W_1(h,\delta)$.

Finally, if $f$ is $L$-smooth, set $F=f/L$ and $d_F=d/L$. Then $F$ has a
$1$-Lipschitz derivative, $|d_F-F'(x_0)|\leq\delta|F'(x_0)|$, and the
update becomes $x_1=x_0-hd_F$. Moreover,
\[
 \frac{|F'(x_1)|^2}{F(x_0)-F_*}
 =\frac{|f'(x_1)|^2}{L(f(x_0)-f_*)}.
\]
This proves the stated scaling.
\end{proof}

\begin{theorem}[Strict dimensional gap in the open intermediate regime]
\label{thm:dimension}
Fix $0<\delta<1$ and $\hL<h<\hR$, with the thresholds from
\eqref{eq:thresholds}. Let $z$ be the root specified in
Lemma~\ref{lem:parameter}, and put $a=1-h$. Then
\begin{equation}
 W_1(h,\delta)<\frac{2z(1-z)}{z-a}.
 \label{eq:strict-dimensional-gap}
\end{equation}
Thus the sharp intermediate-regime bound cannot be attained in one
dimension, whereas the preceding bivariate construction attains it.
\end{theorem}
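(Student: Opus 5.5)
The plan is to derive strictness from the equality conditions of the certificate rather than by comparing the two closed-form expressions directly. Throughout we work with $L=1$. The upper bound of Lemma~\ref{lem:certificate} holds in every dimension, including dimension one, so $W_1(h,\delta)\le 1/D=2z(1-z)/(z-a)$ holds automatically. By Proposition~\ref{prop:one-dimensional}, $W_1$ is attained by one of two explicit scalar examples:
\begin{itemize}
\item the Huber example with effective stepsize $t_-=h(1-\delta)$ and $d=(1-\delta)f'(x_0)$;
\item the quadratic $y^2/2$ with $t_+=h(1+\delta)$ and $d=(1+\delta)f'(x_0)$.
\end{itemize}
Suppose equality held in \eqref{eq:strict-dimensional-gap}. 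Then the attaining example would satisfy $f_0-f_\ast=D\norm{g_1}^2$. By Remark~\ref{rem:equality} (all coefficients being strictly positive for interior $h$), this forces $Q_{01}=Q_{10}=Q_e=\Qstar=0$ and $w=0$. It therefore suffices to show that, for each of the two scalar examples, the conditions $w=0$ and the root identity \eqref{eq:root-identity} together force $h=\hL$ or $h=\hR$.

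\emph{Huber branch.} Normalize so that $g_0=g_1=1$ and $e=-\delta$. The slacks $Q_{01},Q_{10},\Qstar$ vanish by direct computation, so the only content is $w=0$:
\[
-hz\delta+(z+a)(1-z)=0,\qquad\text{that is,}\qquad h\delta=(z+a)(1-z)/z=k.
\]
Substituting this into \eqref{eq:root-identity} and dividing by $z+a>0$ gives $(z+a)(1-z)^2=z(1-hz-z^2)$. Using $h=1-a$, I expect this to collapse to
\[
(1-2z)(a-z^2)=0 .
\]
Since Lemma~\ref{lem:parameter} gives $z^2-a>0$, we get $z=\tfrac12$. By \eqref{eq:root-endpoints} this means $h=\hL$, which is excluded.

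\emph{Quadratic branch.} Normalize so that $g_0=1$, $g_1=1-h(1+\delta)$ and $e=\delta$. All three interpolation slacks vanish because the function is quadratic, and $Q_e=0$ by construction. The condition $w=0$ reads
\[
hz\delta=(z+a)(z-a+h\delta),\qquad\text{that is,}\qquad -ah\delta=z^2-a^2 .
\]
This requires $a\ne0$, and then $h\delta=(a^2-z^2)/a$. Positivity of $h\delta$ together with $z+a>0$ and $z-a>0$ then forces $a<0$. Substituting into \eqref{eq:root-identity} gives a polynomial relation in $(a,z)$. I expect it to factor with the radicand $A=a(1-z)+z^2$ as a factor, with the cofactor nonvanishing on the admissible range. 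Then $A=0$, and by \eqref{eq:root-endpoints} this means $h=\hR$, again excluded.

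The main obstacle is this last factorization. If the algebra does not factor cleanly, the fallback is to prove the scalar inequality $2(1-h(1+\delta))^2<2z(1-z)/(z-a)$ directly. At $h=\hR$ the two sides agree, since Section~\ref{sec:construction} computes $\mathcal C=2[1-h(1+\delta)]^2$ there. One would then argue by monotonicity in $h$ along the interval, using the parameterization $h=H(\beta)$ from the proof of Lemma~\ref{lem:parameter}.

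A small point also needs care: $W_1$ is a maximum of two terms, so equality would force whichever example realizes the maximum to attain $1/D$. Both branches are excluded for $\hL<h<\hR$, so the inequality is strict.
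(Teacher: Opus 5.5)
Your proposal is correct and follows essentially the paper's argument: you exclude equality in the certificate, and your Huber and quadratic branches coincide with the paper's cases $q=p$ and $q\neq p$ (restricting to the two explicit attainers of Proposition~\ref{prop:one-dimensional} rather than to arbitrary scalar instances is a valid shortcut). The factorization you anticipate holds exactly, namely $z(a^2-z^2)^2-a^2(z+a)(1-hz-z^2)=(z^2-a)(z+a)\bigl(a(1-z)+z^2\bigr)$ with $h=1-a$, so the fallback monotonicity argument is unnecessary.
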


\begin{proof}
The parameter properties in Lemma~\ref{lem:parameter} give
\begin{equation}
 0<z<\frac12,\qquad z+a>0,\qquad z^2-a>0,
 \qquad a(1-z)+z^2>0.
 \label{eq:dimension-parameter-properties}
\end{equation}
In particular $z-a=(z-z^2)+(z^2-a)>0$. The equation
\eqref{eq:p}, with $h=1-a$, is equivalent to
\begin{equation}
 h^2\delta^2z=(z+a)(1-hz-z^2).
 \label{eq:dimension-root-identity}
\end{equation}

The certificate \eqref{eq:certificate} bounds every one-dimensional ratio
by $2z(1-z)/(z-a)$. Proposition~\ref{prop:one-dimensional} shows that the
largest such ratio $W_1(h,\delta)$ is attained. Therefore, to prove strict
inequality, it suffices to exclude equality in that certificate for a
scalar instance having positive initial objective gap.

Assume such equality holds. Write
\[
 q=f'(x_0),\qquad p=f'(x_1),\qquad e=d-q.
\]
Here $q\ne0$: a zero derivative would make $x_0$ a minimizer of the convex
function, contrary to the positive initial gap. All coefficients of the
nonnegative terms in \eqref{eq:certificate} are strictly positive under
\eqref{eq:dimension-parameter-properties}. Equality consequently requires
\begin{equation}
 e^2=\delta^2q^2,\qquad
 hz e-(z+a)(zq-p)=0,\qquad Q_{01}=Q_{10}=0.
 \label{eq:scalar-equality-conditions}
\end{equation}
Adding the two interpolation-slack equations gives
\begin{equation}
 h(q-p)d=(q-p)^2.
 \label{eq:scalar-interpolation-equality}
\end{equation}
There are two cases.

If $q=p$, the square-slack equation in
\eqref{eq:scalar-equality-conditions} gives
\[
 e=-\frac{(z+a)(1-z)}{hz}\,q.
\]
Substitute this formula into $e^2=\delta^2q^2$ and cancel $q^2>0$ to obtain
\[
 (z+a)^2(1-z)^2=h^2\delta^2z^2.
\]
Using \eqref{eq:dimension-root-identity} and dividing by $z+a>0$ yields
\[
 (z+a)(1-z)^2=z(1-hz-z^2).
\]
With $h=1-a$, subtraction and expansion factor as
\[
 \begin{split}
 0&=(z+a)(1-z)^2-z(1-hz-z^2)\\
  &=(z^2-a)(2z-1).
 \end{split}
\]
This is impossible because $z^2-a>0$ and $2z-1<0$.

If $q\ne p$, divide \eqref{eq:scalar-interpolation-equality} by $q-p$ to
get $hd=q-p$. It follows that
\[
 e=d-q=\frac{aq-p}{h}.
\]
Substituting this into the square-slack equation gives
\[
 0=z(aq-p)-(z+a)(zq-p)=ap-z^2q.
\]
If $a=0$, this equality would force $q=0$, since $z>0$. Thus $a\ne0$ and
$p=z^2q/a$. The saturated error condition now gives
\[
 \frac{(a^2-z^2)^2}{a^2h^2}q^2=\delta^2q^2,
 \qquad\text{hence}\qquad
 (a^2-z^2)^2=a^2h^2\delta^2.
\]
Multiplication by $z$, followed by
\eqref{eq:dimension-root-identity}, implies
\[
 z(a^2-z^2)^2-a^2(z+a)(1-hz-z^2)=0.
\]
Expanding with $h=1-a$ and factoring gives
\[
 \begin{split}
 0&=z(a^2-z^2)^2-a^2(z+a)(1-hz-z^2)\\
  &=(z^2-a)(z+a)\bigl(a(1-z)+z^2\bigr).
 \end{split}
\]
Every factor on the last line is strictly positive by
\eqref{eq:dimension-parameter-properties}, again a contradiction.

Both cases exclude scalar equality. Since the scalar maximum is attained,
it must be strictly smaller than the sharp bivariate value. This proves
\eqref{eq:strict-dimensional-gap}.
\end{proof}

%% file: sections/conclusion.tex
\section{Conclusion}
The intermediate-regime bound for one relatively inexact gradient step
is attained by an explicit quadratic-plus-Huber function. The parameter
lemma establishes feasibility for every error level and every stepsize
in that regime, and the nonnegative-square certificate supplies the
matching dimension-independent upper bound. Together these prove the
analytic attainment asserted in Vernimmen and Glineur's Conjecture~2.6.
The endpoints reduce to the familiar one-dimensional examples, whereas
the open intermediate interval requires two dimensions.

The result concerns a single step and the two performance ratios stated
in Theorem~\ref{thm:main}. It does not determine the sharp multistep
constant. In particular, one-step extremizers cannot simply be joined
without checking the compatibility of every interpolation constraint
across iterations. Establishing such compatibility is a separate
problem.

%% file: sections/disclosure.tex
\section{Disclosure}
\label{sec:disclosure}
The proof strategy is produced by OpenAI’s GPT-5.6 Sol Ultra through Codex in
response to prompts from the author. Codex was also used to revise
the exposition and prepare the LaTeX manuscript. The author selected the
problem, directed the interactions and revisions, and is the sole named author. The AI system is acknowledged as a reasoning and writing tool, not
as an author. This disclosure is not a substitute for independent expert
mathematical review.

%% file: sections/appendix.tex
\section{An exact example and supplementary verification}\label{sec:verification}
\subsection{A fully explicit interior example}
Take
\[
 h=1,\qquad \delta=\frac{\sqrt{11}}4,\qquad z=\frac14.
\]
These parameters are in the open intermediate regime. Indeed,
$\delta>1/2$ gives $\hL<1$, while \eqref{eq:threshold-ranges} gives
$\hR>1$. Here $a=0$, and the polynomial factors as
\[
 p(t)=t(t^2+t-5/16)=t(t-1/4)(t+5/4).
\]
Thus $z=1/4$ is its largest positive root. The construction gives
\[
 T=\frac1{\sqrt2},\quad v=0,\quad
 u=\frac1{16\sqrt2},\quad c=\frac18,\quad s=\frac1{32}.
\]
Consequently,
\[
 \begin{gathered}
 f(x,y)=\frac{x^2}{2}+H_{1/8}(y),\\
 x_0=(0,5/32),\qquad
 x_1=(1/(16\sqrt2),1/8),\qquad
 d=(-1/(16\sqrt2),1/32).
 \end{gathered}
\]
The gradients are
$g_0=(0,1/8)$ and $g_1=(1/(16\sqrt2),1/8)$, and $x_1=x_0-d$.
The error satisfies
\[
 \norm{d-g_0}^2
 =\frac1{512}+\frac9{1024}
 =\frac{11}{1024}
 =\frac{11}{16}\cdot\frac1{64}
 =\delta^2\norm{g_0}^2.
\]
The exact values are
\[
 f_\ast=0,\qquad f_0=\frac{12}{1024},\qquad
 f_1=\frac9{1024},\qquad \norm{g_1}^2=\frac{18}{1024}.
\]
For example, $f_0=c(5/32)-c^2/2=5/256-1/128=3/256$.
It follows that the gap ratio equals $3/2$ and the decrease ratio equals
$6$, exactly as predicted.

The scalar envelope in Proposition~\ref{prop:one-dimensional} is
\[
 W_1(1,\sqrt{11}/4)
 =\max\left\{\frac4{6-\sqrt{11}},\frac{11}{8}\right\}<\frac32.
\]
For the first term, the inequality is equivalent to
$3\sqrt{11}<10$, which holds because $99<100$; for the second it is
$11<12$. This gives a concrete strict dimension gap without numerical
approximation.

\subsection{Normalizing the objective gap}
The construction can have any prescribed positive initial gap $\Gamma$.
For the normalized example, let $f_0>0$ be as in
\eqref{eq:f0-factor}, and choose $\tau=\sqrt{\Gamma/f_0}$.
Define
\[
 \widehat f(x)=\tau^2 f(x/\tau),\qquad
 \widehat x_i=\tau x_i,\qquad \widehat d=\tau d.
\]
Then $\nabla\widehat f(x)=\tau\nabla f(x/\tau)$, so
\[
 \norm{\nabla\widehat f(x)-\nabla\widehat f(y)}
 \le\tau\norm{x/\tau-y/\tau}=\norm{x-y}.
\]
Convexity is preserved by a positive multiplication and a linear
change of variables. The update and relative error condition are
preserved, the minimum is still zero, and
$\widehat f(\widehat x_0)=\tau^2f_0=\Gamma$.
Both gradient norms squared and objective differences scale by
$\tau^2$, so neither ratio changes.
For smoothness constant $L$, first multiply the function and direction
by $L$, as in Section~\ref{sec:construction}, and choose the spatial
scale using its initial gap $Lf_0$.

\subsection{Verification Results}
The accompanying program \texttt{verify.py} uses exact symbolic algebra
to check twelve identities, including the source-cubic substitution,
the objective and gradient-norm factorizations, the error residual,
the transformation of both published constants, and the complete
nonnegative-square certificate. An identity check establishes equality
of algebraic expressions in the program; the sign and existence
arguments remain the analytic arguments in
Section~\ref{sec:parameters}.

The program additionally checks 74 parameter instances at 100 decimal
digits. They include both interval endpoints, points near the endpoints,
error levels as small as $10^{-8}$ and as large as $1-10^{-8}$, and
four instances with $h=1$. It checks the parameter inequalities, the
root selection, the relative error condition, both performance ratios,
and the vanishing certificate square.
All checks passed in the run accompanying this draft. The largest
normalized numerical residual was less than $2\cdot10^{-93}$.
The text file \texttt{verification\_results.txt} records that run.

These finite numerical checks are not used to infer a statement for
all real parameters. In particular they are not a substitute for
the root-selection proof, the endpoint calculations, or the
nonnegative-square argument.

% \subsection{Provenance and reproducibility}
% The source archive follows the supplied article template, retaining its
% document class, font setup, margins, theorem environments, section
% inputs, and bibliography workflow. The proofs are contained entirely
% in the LaTeX source. To compile, run \texttt{pdflatex main.tex},
% then \texttt{bibtex main}, then \texttt{pdflatex main.tex} twice.

% The conjecture's first appearance used here is
% \cite{VG2025v1}; its current full-text formulation is
% \cite{VG2025v2}. The accepted manuscript was also checked through
% the authors' institutional repository.\footnote{\href{https://research.dial.uclouvain.be/server/api/core/bitstreams/b0fa1ced-8e25-47e8-a18a-15ce5663cf08/content}{Accepted manuscript in the UCLouvain institutional repository.}}
% All source-status checks were performed on September 4, 2026.
% No claim is made that a literature search excludes unpublished work.

%% file: references.bib
@misc{VG2025v1,
 author={Pierre Vernimmen and Fran\c{c}ois Glineur},
 title={Worst-case convergence analysis of relatively inexact gradient descent on smooth convex functions},
 year={2025},
 howpublished={arXiv:2506.17145v1, June 20},
 note={Conjecture 2.6},
 url={https://arxiv.org/html/2506.17145v1}
}

@misc{VG2025v2,
 author={Pierre Vernimmen and Fran\c{c}ois Glineur},
 title={Worst-case convergence analysis of relatively inexact gradient descent on smooth convex functions},
 year={2025},
 howpublished={arXiv:2506.17145v2, September 11},
 note={Theorem 2.3 and Conjecture 2.6; accessed September 4, 2026},
 url={https://arxiv.org/html/2506.17145v2}
}

@article{VG2026,
 author={Pierre Vernimmen and Fran\c{c}ois Glineur},
 title={Worst-case convergence analysis of relatively inexact gradient descent on smooth convex functions},
 journal={Neurocomputing}, volume={660}, pages={131883}, year={2026},
 doi={10.1016/j.neucom.2025.131883},
 url={https://doi.org/10.1016/j.neucom.2025.131883}
}

@article{DT2014,
 author={Yoel Drori and Marc Teboulle},
 title={Performance of first-order methods for smooth convex minimization: a novel approach},
 journal={Mathematical Programming}, volume={145}, pages={451--482}, year={2014},
 doi={10.1007/s10107-013-0653-0},
 url={https://doi.org/10.1007/s10107-013-0653-0}
}

@article{THG2017,
 author={Adrien B. Taylor and Julien M. Hendrickx and Fran\c{c}ois Glineur},
 title={Smooth strongly convex interpolation and exact worst-case performance of first-order methods},
 journal={Mathematical Programming}, volume={161}, pages={307--345}, year={2017},
 doi={10.1007/s10107-016-1009-3},
 url={https://arxiv.org/abs/1502.05666}
}
